\newtheorem{theorem}{Theorem}[section]
\newtheorem{lemma}[theorem]{Lemma}
\def\to{\rightarrow}
\def\m{\mathbb}
\def\c{\mathcal}
\def\b{\mathbf}
\def\sl{\langle}
\def\sr{\rangle}
\begin{document}
\title[Stability of additive functional equation]{On the stability of a generalized\\
additive functional equation$^\star$}
\footnotetext{$^\star$The final publication is available at Springer via http://dx.doi.org/10.1007/s00010-015-0370-2}
\author[M. M. Sadr]{Maysam Maysami Sadr}
\address{Department of Mathematics\\
Institute for Advanced Studies in Basic Sciences\\
P.O. Box 45195-1159, Zanjan 45137-66731, Iran}
\email{sadr@iasbs.ac.ir}
\keywords{Additive functional equation; Stability; Topological group; Amenability}
\begin{abstract}
We consider Hyers-Ulam stability of a functional equation for continuous functions
on a space on which a topological group acts, analogous to the additive functional equation on a group.
We show, among other things, that our generalized additive equation, for continuous functions on a homogenous space
of a strongly amenable topological group, is stable provided that the canonical projection from that group to its 
homogenous space is a fiber bundle.
\end{abstract}
\maketitle
\section{Introduction}
The story of stability of functional identities was began by Ulam \cite{Ulam1} when he proposed
the following question about a special type of stability of homomorphisms between groups.
\begin{quote}
Let $G_1$ and $G_2$ be groups and suppose that $G_2$ has a metric $d$. Given $\epsilon>0$ does there exist $\delta>0$ such that
if a map $f:G_1\to G_2$ satisfies $d(f(yz),f(y)f(z))<\delta$ then there is a group homomorphism $F:G_1\to G_2$ satisfying
$d(F(y),f(y))<\epsilon$?
\end{quote}
In 1941, Hyers \cite{Hyers1} answered affirmatively this question for additive mappings between Banach spaces. The Ulam stability problem and
its generalizations, not only for the equation of homomorphism but also other types of functional identities, has been considered
and developed by many mathematicians. For the history of developments see \cite{Jung1} and \cite{HyersRassias1}.

The method of \emph{amenability} and \emph{invariant means} for the study of stability of functional identities, at the first time, was used by
Sz\'{e}kelyhidi \cite{Szekelyhidi2}. Then Forti \cite{Forti1} showed that for any amenable group $G$ and any map $h$ from $G$ to a Banach space
$E$ satisfying $|h(yz)-h(y)-h(z)|<\delta$ there exists an additive mapping $H:G\to E$ satisfying $|H(y)-h(y)|<\delta$.
The easy proof of Forti has been applied and extended by many authors, for instance see \cite{Szekelyhidi1}, \cite{Forti2},
\cite{ElqorachiManarRassias1}, \cite{Zlatos1}, \cite{Yang2}. See also the survey paper \cite{HyersRassias1}.

Sz\'{e}kelyhidi \cite{Szekelyhidi1} have considered the stability properties of a generalized additive equation
$f(xy)=f(x)\alpha(y)+h(y)$ for a function $f$ on a $G$-set $X$ and functions $h,\alpha$ on $G$ with values in a linear space $E$ where $G$
is a group. In this note we consider and prove some generalizations of the results of \cite{Szekelyhidi1}, as described below.

In Section 3 we consider the stability of the equation
$$F(xy)=F(x)\cdot y+H(y),$$
where $H$ is a map on a discrete group $G$ with values in a Banach
right $G$-module $E$, and $F$ is a map on a right $G$-set $X$ with values in $E$. Indeed, we replace the complex or real valued map $\alpha$ in
\cite{Szekelyhidi1} by a fixed action of $G$ on the Banach space $E$.
It is shown in Theorem \ref{T1} that the mentioned equation has Hyers-Ulam stability if $G$ is an amenable group and $E$ is a dual Banach $G$-module.
Moreover, in this case, it is shown that $H$ is an additive mapping.

Almost recently there has been initiated many researches and interest about amenability notions of non-locally compact topological groups,
see \cite{GrigorchukHarpe1} and the references therein. So it would be interesting to consider and generalize the known results of stability
of different types of equations on discrete amenable groups to topological groups with amenable properties. Indeed, we do that in Section 4.
In Theorems \ref{T2} and \ref{T3} we consider the stability of the same functional equation of Theorem \ref{T1} but for topological groups
and $G$-spaces, and with the trivial action of $G$ on $E$.
Rather than Banach $G$-modules as in Theorem \ref{T1}, we restrict our selves in Theorems \ref{T2} and \ref{T3} to the case in which
$G$ acts trivially on $E$. The reason is that with the assumption of nontrivial action
the conditions under which Theorems \ref{T2} and \ref{T3} are satisfied (or at least under which the author was able to prove them)
become so strong that imply $F=0$ and $H=0$, the case with no interest. In Theorem \ref{T2} we show that the mentioned equation has
Hyers-Ulam stability for continuous functions  whenever $G$ is a \emph{strongly} amenable topological group and $X$ is a homogenous
$G$-space for which the canonical projection from $G$ to $X$ is a fiber bundle, and $E$ is a dual Banach space.
Note that homogenous spaces are one of the most important classes of spaces on which topological groups act.
Theorem \ref{T3} shows that the stability is satisfied for uniformly continuous functions (see section 2 for exact definitions)
whenever $G$ is an amenable topological group and $X$ is an arbitrary $G$-space.
\section{Preliminaries}
For any topological space $X$ we denote by $\b{B}(X)$ and $\b{C}(X)$ the space of real valued bounded and continuous
functions on $X$, respectively. We always consider $\b{B}(X)$ as a Banach space with uniform norm. Also we let $\b{CB}(X)=\b{C}(X)\cap\b{B}(X)$.

Let $G$ be a topological group and $X$ be a right $G$-space. (This means that there is a jointly continuous map $(x,y)\mapsto xy$ from
$X\times G\to X$ satisfying $xe=x$ and $x(yz)=(xy)z$, for $x\in X$ and $y,z\in G$.)
Then $\b{RU}(X)$ denotes the space of real functions $f$ on $X$ for which
there is an open neighborhood $U_\epsilon$ in $G$ of $e$, for every $\epsilon>0$, such that $\sup_{x\in X}|f(xy)-f(x)|<\epsilon$
for every $y\in U_\epsilon$. Or equivalently, for every $y\in G$ and $\epsilon>0$ there is an open $W\subseteq G$ containing $y$ such that
$\sup_{x\in X}|f(xy)-f(xy')|<\epsilon$ for every $y'\in W$.
Such functions are called \emph{right uniform}. The space of left uniform functions, denoted $\b{LU}(X)$,
contains functions $f$ such that for every $\epsilon>0$ and every $x\in X$ there is an open $V$ in $X$ with $x\in V$ and
$\sup_{y\in G}|f(xy)-f(x'y)|<\epsilon$ for every $x'\in V$. We let $\b{U}(X)=\b{RU}(X)\cap\b{LU}(X)$. (Note that
we always have $\b{LU}(X)\subseteq\b{C}(X)$, but in general $\b{RU}(X)\varsubsetneqq\b{C}(X)$. For example consider the action of $G=\b{T}^1$,
the unite circle group, on $X=\m{C}$ from right by rotation around the origin. Then $f:\m{C}\to\m{R}$ defined by $f(0)=0$ and $f(x)=1$ for
$x\in\m{C}-0$ is right uniform. Also note that if $G$ is discrete then $\b{U}(X)=\b{LU}(X)$.) Other function spaces, e.g.
$\b{RUCB}(X)$, are defined in the obvious way. Also the analogue of these definitions are satisfied for a left $G$-space.
(Note that if $G_l$ and $G_r$ denote $G$ as a left $G$-space and right $G$-space, respectively, then
$\b{RU}(G_l)=\b{RU}(G_r)$ and $\b{LU}(G_l)=\b{LU}(G_r)$.)

Let $G$ and $X$ be as above. A subspace $A$ of $\b{B}(X)$ containing constant functions
is called \emph{right invariant} if for every $f\in A$  and $y\in G$ the function $\c{R}_yf$, defined by $(\c{R}_yf)(x)=f(x.y)$, belongs to $A$.
(The notation $\c{L}_yf$ of left translation by $y$ of a function $f$ on a left $G$-space is defined similarly.)
Then a bounded linear functional $m$ on $A$ with $m(1)=\|m\|=1$ is called a \emph{right invariant mean} if $m(\c{R}_yf)=m(f)$, for every
$f\in A$ and $y\in G$. Left invariant subspaces and left invariant means for left $G$-spaces are defined similarly.
(We remark that there is no common use of adjectives "right" and "left" in the literatures.)

A topological group $G$ is called amenable (resp. strongly amenable) (\cite{GrigorchukHarpe1})
if there is a left invariant mean on $\b{LUCB}(G)$ (resp. $\b{CB}(G)$). Because the inverse map exists and is continuous on a topological
group, amenability (resp. strong amenability) is equivalent to existence of a right invariant mean on $\b{RUCB}(G)$ (resp. $\b{CB}(G)$).
It is well known that for a locally compact group amenability and strong amenability coincide (\cite{Pier1}).
We refer the reader to \cite{GrigorchukHarpe1} and references therein for the
notion of (strong) amenability of non-locally compact groups and many interesting examples of such groups.
(We remark that for a right $G$-space $X$ if there is a right invariant mean on $\b{RUCB}(X)$ then in the terminology of \cite{Greenleaf1}
it is said that $G$ \emph{has amenable action} on $X$. Also see \cite[Section 24.B]{Pier1} for different notions of an amenable action.)
\begin{lemma}\label{L1}
Let $G$ be a topological group and $X$ be a right $G$-space. Let $m$ be a right invariant mean on $\b{RUCB}(G)$ (resp. $\b{CB}(G)$)
and fix an element $x_0$ in $X$. Then the real function $m'$ on $\b{RUCB}(X)$ (resp. $\b{CB}(X)$), defined by $m'(f)=m(y\mapsto f(x_0y))$,
is a right invariant mean.
\end{lemma}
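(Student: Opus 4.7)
The plan is to verify the three defining properties of a right invariant mean for $m'$: (i) $m'$ is well-defined on $\mathbf{RUCB}(X)$ (resp.\ $\mathbf{CB}(X)$); (ii) $m'$ is a bounded linear functional with $m'(1)=\|m'\|=1$; (iii) $m'$ is right invariant. Properties (ii) and (iii) will be routine pushes through the definition once (i) is secured, so the real content is (i).

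First I would show that for any $f\in\mathbf{RUCB}(X)$ the function $g\colon G\to\mathbb{R}$ defined by $g(y)=f(x_0y)$ lies in $\mathbf{RUCB}(G)$. Boundedness is immediate since $\|g\|_\infty\le\|f\|_\infty$, and continuity follows from joint continuity of the action $X\times G\to X$ composed with $f$. For the right-uniform property, given $\epsilon>0$ choose the open neighborhood $U_\epsilon$ of $e$ in $G$ provided by $f\in\mathbf{RU}(X)$, so that $\sup_{x\in X}|f(xz)-f(x)|<\epsilon$ for all $z\in U_\epsilon$. Specializing $x=x_0y$ gives $|g(yz)-g(y)|=|f(x_0yz)-f(x_0y)|<\epsilon$ uniformly in $y$, hence $g\in\mathbf{RU}(G)$. (For the $\mathbf{CB}$ version one only needs continuity and boundedness of $g$, which we already have.) Therefore $m'(f)=m(g)$ makes sense.

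Next, linearity and the normalization $m'(1)=m(1)=1$ are inherited directly from $m$, and the bound $|m'(f)|\le\|m\|\cdot\|g\|_\infty\le\|f\|_\infty$ together with $m'(1)=1$ gives $\|m'\|=1$. Finally, for right invariance pick $z\in G$ and $f\in\mathbf{RUCB}(X)$; then
\[
m'(\mathcal{R}_zf)=m\bigl(y\mapsto(\mathcal{R}_zf)(x_0y)\bigr)=m\bigl(y\mapsto f(x_0yz)\bigr)=m(\mathcal{R}_zg)=m(g)=m'(f),
\]
where the penultimate equality uses right invariance of $m$ on $\mathbf{RUCB}(G)$ (resp.\ $\mathbf{CB}(G)$), and the fact that $\mathcal{R}_zg\in\mathbf{RUCB}(G)$ (resp.\ $\mathbf{CB}(G)$) because these spaces are right translation invariant.

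The only slightly delicate point is step (i), verifying that the orbit map $y\mapsto f(x_0y)$ preserves the right-uniform class; this is precisely where the uniformity hypothesis on $f$ is used, and it cannot be weakened to mere continuity in general (as illustrated by the $\mathbb{T}^1$ example in the text), which is why the two cases $\mathbf{RUCB}$ and $\mathbf{CB}$ must be handled in parallel.
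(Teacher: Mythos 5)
Your proposal is correct and is exactly the direct verification the paper has in mind (its proof is simply ``Straightforward''): check that $y\mapsto f(x_0y)$ lands in $\b{RUCB}(G)$ (resp.\ $\b{CB}(G)$), then push linearity, normalization, and right invariance through $m$. The only substantive point is the one you isolate --- that right uniformity of $f$ on $X$ transfers to the orbit map on $G$ --- and your specialization $x=x_0y$ handles it correctly.
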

\begin{proof}
Straightforward.
\end{proof}
Let $G$ and $X$ be as above. For any Banach space $E$ we can define the various spaces of functions on $X$ with
values in $E$ instead $\m{R}$, e.g. $\b{RUCB}(X;E)$ is the space of all bounded continuous right uniform functions with values in $E$.

Let $G$ be a topological group. Then by a Banach left $G$-module we mean a Banach space $E$ together with a continuous map $(y,\xi)\mapsto y\cdot\xi$
from $G\times E$ to $E$ satisfying $y\cdot (y'\cdot\xi)=(yy')\cdot\xi$ and $e\cdot\xi=\xi$, and such that the map $\xi\mapsto y\cdot\xi$ is an
isometric linear automorphism on $E$. The Banach right $G$-modules are defined similarly. If $E$ is a Banach left $G$-module then $E^*$
becomes a right Banach $G$-module with the action defined by $\sl\alpha\cdot y,\xi\sr=\sl\alpha,y\cdot\xi\sr$ for every $\alpha\in E^*$.
\begin{lemma}\label{L2}
Let $G$ be a topological group, $X$ be a right $G$-space, and $E$ be a left Banach $G$-module. Suppose that $m$ is a
right invariant mean on $\b{RUCB}(X)$ (resp. $\b{CB}(X)$). Then $\tilde{m}$, defined by $\tilde{m}(f)(\xi)=m(x\mapsto\sl f(x),\xi\sr)$ ($\xi\in E$),
is a well-defined bounded linear operator from $\b{RUCB}(X;E^*)$ (resp. $\b{CB}(X;E^*)$) to $E^*$ with the following properties.
\begin{enumerate}
\item[i)] $\|\tilde{m}\|=1$, and for every $c$, $\tilde{m}(c)=c$, where $c$ mutually
denotes an element of $E^*$ and the constant function with value $c$.
\item[ii)] $\tilde{m}(\c{R}_yf)=\tilde{m}(f)$ for every $y\in G$ and every $f$.
\item[iii)] $\tilde{m}(x\mapsto f(x)\cdot y)=\tilde{m}(f)\cdot y$ for every $y\in G$ and every $f$.
\end{enumerate}
\end{lemma}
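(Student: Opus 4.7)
The plan is to unpack the definition term by term and verify that every check reduces to an application of the invariant mean $m$ to an auxiliary scalar function of the form $x\mapsto\sl f(x),\xi\sr$. The main preliminary step is well-definedness: given $f\in\b{RUCB}(X;E^*)$ and $\xi\in E$, I would show that the scalar map $g_{f,\xi}:x\mapsto\sl f(x),\xi\sr$ lies in $\b{RUCB}(X)$. Continuity and boundedness follow from continuity of the duality pairing and $|g_{f,\xi}(x)|\le\|f\|_\infty\|\xi\|$. For right uniformity, the inequality $|g_{f,\xi}(xy)-g_{f,\xi}(x)|\le\|f(xy)-f(x)\|\,\|\xi\|$ transfers the modulus of continuity of $f$ to $g_{f,\xi}$. (The $\b{CB}$-case is even easier since no uniformity is required.) Hence $m(g_{f,\xi})$ is defined, and $\xi\mapsto m(g_{f,\xi})$ is linear in $\xi$ by linearity of $m$ and bilinearity of the pairing, with $|m(g_{f,\xi})|\le\|f\|_\infty\|\xi\|$, so $\tilde m(f)\in E^*$ with $\|\tilde m(f)\|\le\|f\|_\infty$. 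Linearity of $\tilde m$ in $f$ is immediate from linearity of $m$.

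For (i), the upper bound $\|\tilde m\|\le 1$ is the computation above. To compute $\tilde m(c)$ for a constant $c\in E^*$, the function $g_{c,\xi}$ is the constant $\sl c,\xi\sr$, and since $m(1)=1$ and $m$ is linear, $\tilde m(c)(\xi)=\sl c,\xi\sr$, that is $\tilde m(c)=c$. Taking $c$ of norm $1$ forces $\|\tilde m\|\ge 1$.

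Property (ii) follows because $g_{\c R_y f,\xi}(x)=\sl f(xy),\xi\sr=g_{f,\xi}(xy)=(\c R_y g_{f,\xi})(x)$, so by right invariance of $m$, $\tilde m(\c R_yf)(\xi)=m(\c R_yg_{f,\xi})=m(g_{f,\xi})=\tilde m(f)(\xi)$. Property (iii) uses the definition of the dual action: for $f_y(x):=f(x)\cdot y$, we have $\sl f_y(x),\xi\sr=\sl f(x)\cdot y,\xi\sr=\sl f(x),y\cdot\xi\sr=g_{f,y\cdot\xi}(x)$, so $\tilde m(f_y)(\xi)=m(g_{f,y\cdot\xi})=\tilde m(f)(y\cdot\xi)=\sl\tilde m(f)\cdot y,\xi\sr$.

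The only nontrivial point, and thus the main potential obstacle, is the verification that $g_{f,\xi}$ inherits right uniformity (and continuity) from $f$, since this is what legitimizes feeding $g_{f,\xi}$ into $m$ in the $\b{RUCB}$-case. Everything else is a routine transfer of properties of $m$ through the pairing. I should also briefly remark that $\tilde m(f)$ is indeed a bounded linear functional on $E$ rather than merely a function, which is built into the computation $|\tilde m(f)(\xi)|\le\|f\|_\infty\|\xi\|$.
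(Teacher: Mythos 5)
Your proposal is correct and follows essentially the same route as the paper's proof: reduce everything to the scalar functions $x\mapsto\langle f(x),\xi\rangle$, verify they lie in the right function space, and then transfer linearity, the norm bound, invariance, and the module action through the duality pairing. You in fact supply slightly more detail than the paper on the one point it dismisses as ``easily checked'' (that $x\mapsto\langle f(x),\xi\rangle$ inherits right uniformity from $f$), which is a welcome addition rather than a deviation.
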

\begin{proof}
It is easily checked that if $f$ is in $\b{RUCB}(X;E^*)$ (resp. $\b{CB}(X;E^*)$) then for every $\xi\in E$ the map $x\mapsto\sl f(x),\xi\sr$
is in $\b{RUCB}(X)$ (resp. $\b{CB}(X)$) and the assignment $\xi\mapsto m(x\mapsto\sl f(x),\xi\sr)$ defines a bonded linear functional
on $E$. So $\tilde{m}$ is a well-defined linear map. We have,
\begin{align*}
\|\tilde{m}\|&=\sup_{\|f\|_\infty\leq1}\|\tilde{m}(f)\|\\
&=\sup_{\|f\|_\infty\leq1}\sup_{\|\xi\|\leq1}|m(x\mapsto\sl f(x),\xi\sr)|\\
&\leq\sup_{\|f\|_\infty\leq1}\sup_{\|\xi\|\leq1}\|x\mapsto\sl f(x),\xi\sr\|_\infty\\
&=\sup_{\|f\|_\infty\leq1}\sup_{\|\xi\|\leq1}\sup_{x\in X}|\sl f(x),\xi\sr|\\
&\leq\sup_{\|f\|_\infty\leq1}\sup_{\|\xi\|\leq1}\sup_{x\in X}\|f(x)\|\|\xi\|\leq1.
\end{align*}
Also for every $c\in E^*$, $\tilde{m}(c)(\xi)=m(x\mapsto\sl c,\xi\sr)=\sl c,\xi\sr$. So $\tilde{m}(c)=c$. This completes the proof of i).
ii) is proved by,
\begin{align*}
\tilde{m}(\c{R}_yf)(\xi)&=m(x\mapsto\sl \c{R}_yf(x),\xi\sr)\\
&=m(x\mapsto\sl f(xy),\xi\sr)\\
&=m(x\mapsto\sl f(x),\xi\sr)=\tilde{m}(f).
\end{align*}
iii) is proved by,
\begin{align*}
\tilde{m}(x\mapsto f(x)\cdot y)(\xi)&=m(x\mapsto\sl f(x)\cdot y,\xi\sr)\\
&=m(x\mapsto\sl f(x),y\cdot\xi\sr)\\
&=\tilde{m}(f)(y\cdot\xi)=(\tilde{m}(f)\cdot y)(\xi).
\end{align*}
\end{proof}
\section{The result in discrete case}
\begin{theorem}\label{T1}
Let $G$ be an amenable (discrete) group, $X$ be a right $G$-set and $E$ be a Banach left $G$-module. Suppose there are given maps $f:X\to E^*$
and $h:G\to E^*$ satisfying
$$\|f(xy)-f(x)\cdot y-h(y)\|\leq\delta,$$
for every $x\in X$ and $y\in G$. Then there exist maps $F:X\to E^*$ and $H:G\to E^*$ satisfying,
$$H(yz)=H(y)\cdot z+H(z),\hspace{10mm}F(xy)=F(x)\cdot y+ H(y),$$
$$\|H(y)-h(y)\|\leq\delta,\hspace{10mm}\|F(x)-f(x)\|\leq2\delta.$$
\end{theorem}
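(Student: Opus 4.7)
The plan is to manufacture $H$ and $F$ by averaging $f$ against suitable invariant means, in the spirit of Forti and Sz\'ekelyhidi. Since $G$ is discrete and amenable, $\b{B}(G)=\b{CB}(G)$ carries a right invariant mean, which Lemma \ref{L1} transports to a right invariant mean $m$ on $\b{B}(X)$; Lemma \ref{L2} then produces $\tilde{m}\colon\b{B}(X;E^*)\to E^*$ with the three listed properties. Discrete amenability likewise yields a left invariant mean on $\b{B}(G)$ (e.g.\ by precomposing a right invariant one with inversion), and the symmetric left-sided analogue of Lemma \ref{L2} gives $\tilde{n}\colon\b{B}(G;E^*)\to E^*$ satisfying $\tilde{n}(\c{L}_z\phi)=\tilde{n}(\phi)$ together with the right-action property $\tilde{n}(y\mapsto\phi(y)\cdot z)=\tilde{n}(\phi)\cdot z$.

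I define $H(y):=\tilde{m}_x[f(xy)-f(x)\cdot y]$. The integrand is within $\delta$ of the constant $h(y)$ by hypothesis, so it is bounded in $x$, and property i of $\tilde{m}$ gives $\|H(y)-h(y)\|\leq\delta$. For the crossed homomorphism identity, I split
\[
H(yz)=\tilde{m}_x\bigl[f((xy)z)-f(xy)\cdot z\bigr]+\tilde{m}_x\bigl[(f(xy)-f(x)\cdot y)\cdot z\bigr].
\]
The first summand equals $\tilde{m}(\c{R}_y\psi)=\tilde{m}(\psi)=H(z)$ for $\psi(x):=f(xz)-f(x)\cdot z$, by property ii; the second equals $H(y)\cdot z$ by property iii. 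Hence $H(yz)=H(y)\cdot z+H(z)$ exactly. Specializing gives $H(e)=0$ and $H(z^{-1})=-H(z)\cdot z^{-1}$, identities I will need below.

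I then set $F(x):=\tilde{n}_y[(f(xy)-H(y))\cdot y^{-1}]$. Since the $G$-action on $E^*$ is isometric and $\|f(xy)-f(x)\cdot y-H(y)\|\leq 2\delta$, the integrand lies within $2\delta$ of $f(x)$, hence is bounded in $y$, and $\|F(x)-f(x)\|\leq 2\delta$. To verify $F(xz)=F(x)\cdot z+H(z)$, I rewrite $F(xz)=\tilde{n}_y[(f(xzy)-H(y))\cdot y^{-1}]$ and substitute $y=z^{-1}u$; by left invariance of $\tilde{n}$ this preserves the value, and the integrand becomes $(f(xu)-H(z^{-1}u))\cdot u^{-1}z$. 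Using $H(z^{-1}u)=-H(z)\cdot(z^{-1}u)+H(u)$ and the cancellation $(z^{-1}u)(u^{-1}z)=e$, this simplifies to $(f(xu)-H(u))\cdot u^{-1}z+H(z)$; applying $\tilde{n}_u$, property iii extracts the factor $z$ from the first piece while the constant $H(z)$ is returned unchanged, yielding $F(x)\cdot z+H(z)$.

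The main obstacle is choosing the right invariance on each side: the $H$-construction needs the right invariance already packaged in Lemma \ref{L2}, whereas the $F$-construction genuinely demands a left-invariant mean, because the change of variable aligning $xzy$ with $xu$ is the left translation $y\mapsto z^{-1}u$ in the $G$-variable---right invariance would not match the right $G$-set action on $X$. Once both invariances are in play (for free in the discrete amenable setting) and the crossed-homomorphism identity for $H(z^{-1}u)$ is invoked, the inverse-element arithmetic cancels cleanly and the right-module properties of $\tilde{n}$ finish the proof.
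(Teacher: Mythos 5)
Your proof is correct and follows essentially the same route as the paper: you define $H(y)$ by applying the right-invariant mean on $X$ to $x\mapsto f(xy)-f(x)\cdot y$, and $F(x)$ by applying a left-invariant mean on $G$ to $z\mapsto (f(xz)-H(z))\cdot z^{-1}$, using the identity $H(z^{-1})=-H(z)\cdot z^{-1}$ to close the computation. The only differences are cosmetic (you verify $F(xz)=F(x)\cdot z+H(z)$ by a change of variables rather than expanding $F(x)\cdot y+H(y)$ as the paper does), and your observation about which invariance is needed on each side matches the paper exactly.
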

\begin{proof}
It follows from Lemmas \ref{L1} and \ref{L2} that there exists a bounded linear map $m:\b{B}(X;E^*)\to E^*$ satisfying properties analogous to those
of $\tilde{m}$ in Lemma \ref{L2}. Since
$\|f(xy)-f(x)\cdot y\|\leq\|h(y)\|+\delta$, we may define a map $H:G\to E^*$ by $H(y)=m(x\mapsto f(xy)-f(x)\cdot y)$.
We have
\begin{align*}
&H(y)\cdot z+H(z)\\
=&m(x\mapsto f(xy)-f(x)\cdot y)\cdot z+m(x\mapsto f(xz)-f(x)\cdot z)\\
=&m(x\mapsto f(xy)\cdot z-f(x)\cdot (yz))+m(x\mapsto f((xy)z)-f(xy)\cdot z )\\
=&m(x\mapsto f(x(yz))-f(x)\cdot(yz))=H(yz).
\end{align*}
The norm inequality is proved by,
\begin{align*}
\|H(y)-h(y)\|&=\|m(x\mapsto f(xy)-f(x)\cdot y)-h(y)\|\\
&=\|m(x\mapsto f(xy)-f(x)\cdot y-h(y))\|\\
&\leq\|m\|\|x\mapsto f(xy)-f(x)\cdot y-h(y)\|_\infty\leq\delta.
\end{align*}
It follows from the assumptions and the above inequality that
$$\|f(xz)-f(x)\cdot z-H(z)\|\leq2\delta.$$
So,
\begin{align*}
\|f(xz)\cdot z^{-1}-H(z)\cdot z^{-1}\|&=\|f(xz)-H(z)\|\\
&\leq\|f(x)\cdot z\|+2\delta=\|f(x)\|+2\delta.
\end{align*}
On the other hand a result analogous to Lemma \ref{L2} shows that there exists a bounded linear map $n:\b{B}(G;E^*)\to E^*$ satisfying
$\|n\|=1$, $n(c)=c$ ($c\in E^*$), $n(\c{L}_yg)=n(g)$, and $n(z\mapsto g(z)\cdot y)=n(g)\cdot y$.
Therefore we can define a map $F:X\to E^*$ by
$$F(x)=n(z\mapsto f(xz)\cdot z^{-1}-H(z)\cdot z^{-1})$$
and then we have,
\begin{align*}
F(x)\cdot y+H(y)&=n(z\mapsto f(xz)\cdot z^{-1}-H(z)\cdot z^{-1})\cdot y+ H(y)\\
&=n(z\mapsto f(xz)\cdot (z^{-1}y)-H(z)\cdot (z^{-1}y)+ H(y))\\
&=n(z\mapsto f(xz)\cdot (z^{-1}y)+H(z^{-1}y))\\
&=n(z\mapsto f(x(yz))\cdot z^{-1}+H(z^{-1}))\\
&=n(z\mapsto f((xy)z)\cdot z^{-1}-H(z)\cdot z^{-1})=F(xy).
\end{align*}
The norm inequality is proved by,
\begin{align*}
\|F(x)-f(x)\|&=\|n(z\mapsto f(xz)\cdot z^{-1}-H(z)\cdot z^{-1})-f(x)\|\\
&=\|n(z\mapsto f(xz)\cdot z^{-1}-H(z)\cdot z^{-1}-f(x))\|\\
&\leq\|n\|\|z\mapsto f(xz)\cdot z^{-1}-H(z)\cdot z^{-1}-f(x)\|_\infty\\
&=\sup_{z\in G}\|f(xz)\cdot z^{-1}-H(z)\cdot z^{-1}-f(x)\|\\
&=\sup_{z\in G}\|f(xz)-H(z)-f(x)\cdot z\|\leq2\delta.
\end{align*}
\end{proof}
As the following observation shows, the map $H$ in the above theorem is unique. Let $H':G\to E^*$ be another additive map
satisfying $\|H'(y)-h(y)\|\leq\delta$. Then $H-H':G\to E^*$ is additive and bounded, and so $H-H'=0$. In general the map $F$
is not unique. For example consider the following case. Let $G=(\m{R},+)$ and suppose that $G$ acts on $X=\m{R}$ by translation
and $G$ acts on the Banach space $E=E^*=\m{R}$ trivially. Let $h=f=\mathrm{id}$. Then $H=\mathrm{id}$, $F=\mathrm{id}$,
and $F=\mathrm{id}+\delta/2$ are satisfied in the theorem.
\section{The results in the continuous case}
We shall need the following lemma.
\begin{lemma}\label{L3}
Let $G$ be a topological group and $E$ be a Banach space. Suppose that $H:G\to E$ is an additive map and $h:G\to E$ is a continuous map
such that for a constant $\delta>0$, $\|H(y)-h(y)\|\leq\delta$ for every $y\in G$. Then $H$ is also continuous.
\end{lemma}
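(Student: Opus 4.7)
The plan is to show continuity of $H$ at the identity $e$, after which continuity everywhere follows immediately from additivity, since $H(yy_0)-H(y_0)=H(y)$ and right multiplication by $y_0$ is a homeomorphism of $G$.

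To control $H$ near $e$, I would use the standard divisibility trick available for additive maps: from $H(yz)=H(y)+H(z)$ one gets $H(e)=0$ (so in particular $\|h(e)\|\le\delta$) and by induction $H(y^n)=nH(y)$ for every positive integer $n$. Hence
\[
\|H(y)\|=\frac{1}{n}\|H(y^n)\|\le\frac{1}{n}\bigl(\|H(y^n)-h(y^n)\|+\|h(y^n)-h(e)\|+\|h(e)\|\bigr)\le\frac{2\delta+\|h(y^n)-h(e)\|}{n}.
\]
This is the key estimate: although the uniform bound on $\|H-h\|$ by $\delta$ is not itself small, dividing by $n$ makes the $\delta$-contribution arbitrarily small, while the $n$th-power map $y\mapsto y^n$ is jointly continuous on $G$ and sends $e$ to $e$, so the remaining term can be made small by continuity of $h$ at $e$.

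Concretely, given $\eta>0$, I would first choose $n$ so large that $2\delta/n<\eta/2$. Then, using continuity of the $n$th-power map at $e$ together with continuity of $h$ at $e$, I would select an open neighborhood $U$ of $e$ in $G$ such that $\|h(y^n)-h(e)\|<n\eta/2$ for every $y\in U$. Plugging this into the above inequality gives $\|H(y)\|<\eta$ for all $y\in U$, i.e.\ continuity of $H$ at $e$. Continuity at an arbitrary point $y_0$ then follows from the identity $H(yy_0)=H(y)+H(y_0)$ and the fact that neighborhoods of $y_0$ are exactly right-translates of neighborhoods of $e$.

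The only mildly delicate point is the interplay of the order of quantifiers in the estimate: one must pick $n$ first (to absorb the $\delta$-term) and only then shrink the neighborhood of $e$ so that $h\circ(\cdot)^n$ lies close to $h(e)$. No commutativity of $G$ is needed, since the identity $H(y^n)=nH(y)$ uses only additivity of $H$ on the cyclic subsemigroup generated by $y$.
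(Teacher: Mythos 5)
Your proof is correct and rests on exactly the same idea as the paper's: the identity $H(y^n)=nH(y)$, the bounds $\|H-h\|\le\delta$ and $\|h(e)\|\le\delta$ (from $H(e)=0$), and continuity of $h$ at $e$, followed by translation to get continuity everywhere. The only difference is presentational --- you argue directly with an explicit choice of $n$ and neighborhood, while the paper runs the same estimate as a proof by contradiction using nets.
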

\begin{proof}
It is enough to show that $H$ is continuous at $e$. Suppose, on the contrary, that $H$ is not continuous at $e$. Then there are $\epsilon>0$
and a net $(y_\lambda)_\lambda$ in $G$ such that $y_\lambda\to e$ and $\|y_\lambda\|>\epsilon$. Let $k\in\mathbb{N}$ be such that
$k\epsilon>3\delta$. Then the net $(y_\lambda^k)_\lambda$ converges to $e$ and $\|H(y_\lambda^k)\|>k\epsilon>3\delta$. On the
other hand, for some $\lambda_0$, $\|h(y_{\lambda_0}^k)-h(e)\|<\delta$. So we get the following contradiction.
$$\|H(y_{\lambda_0}^k)\|\leq\|h(e)\|+\|h(y_{\lambda_0}^k)-h(e)\|+\|H(y_{\lambda_0}^k)-h(y_{\lambda_0}^k)\|\leq3\delta.$$
\end{proof}
Let $G$ be a topological group and $L$ be a closed subgroup of $G$. As usual, $G/L$ denotes the space of right cosets of $L$ in $G$
together with the quotient topology, i.e. the topology coinduced by the canonical projection $\pi:G\to G/L$ defined by $\pi(y)=Ly$.
Then $G/L$ is also a right $G$-space by the action $(Ly,y')\mapsto Lyy'$. By a \emph{local cross section} around a point $Ly$ of $G/L$
we mean a continuous map $s$ from an open neighborhood $U$ of $Ly$ in $G/L$ to $G$ such that $\pi s=\mathrm{id}_U$. It is well known
that the map $\pi:G\to G/L$ is a \emph{fiber bundle} if and only if the point $Le$ (and hence all other points of $G/L$) has a local cross
section (\cite{Steenrod1}). The characterization of the pair $(G,L)$ for which $\pi$ is a fiber bundle is an unsolved problem.
But there are many partial answers to this problem. Just for example we refer a few results: If $G$ is locally compact
with finite covering dimension then for any closed subgroup $L$, $\pi$ is a fiber bundle (\cite{Nagami1}). This is the case
foe a (finite dimensional) Lie group $G$. Also if $L$ is a compact Lie group and $G$ is an arbitrary topological group
then $\pi$ is a fiber bundle (\cite{Gleason1}).
\begin{theorem}\label{T2}
Let $G$ be a strongly amenable topological group and $L$ be a closed subgroup of $G$ for which the canonical map $\pi:G\to X$ is a fiber
bundle, where $X=G/L$ is the right $G$-space of right cosets. Let $E$ be a Banach space and suppose that there are given
continuous maps $h:G\to E^*$ and $f:X\to E^*$ such that
$$\|f(xy)-f(x)-h(y)\|\leq\delta\hspace{10mm}(x\in X, y\in G).$$
Then there exist continuous maps $H:G\to E^*$ and $F:X\to E^*$ such that $H(yz)=H(y)+H(z)$, $F(xy)=F(x)+H(y)$, $\|H(y)-h(y)\|\leq\delta$, and
$\|F(x)-f(x)\|\leq2\delta$.
\end{theorem}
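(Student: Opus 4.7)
The strategy is to borrow the construction of $H$ from Theorem \ref{T1} verbatim, and then to bypass its second invariant mean by observing that $H$ must vanish on the stabilizer $L$, so that $H$ itself descends along $\pi$ to give $F$.

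\emph{Construction of $H$.} Strong amenability of $G$ yields a right invariant mean on $\b{CB}(G)$; Lemma \ref{L1}, applied with base point $x_0 := Le \in X$, passes this to a right invariant mean on $\b{CB}(X)$, and Lemma \ref{L2} then produces a bounded linear operator $\tilde m : \b{CB}(X;E^*) \to E^*$ of norm one that fixes constants and satisfies $\tilde m(\c{R}_y\phi) = \tilde m(\phi)$ for every $y \in G$ (property (iii) of Lemma \ref{L2} being vacuous since $G$ acts trivially on $E$). The hypothesis $\|f(xy) - f(x) - h(y)\| \leq \delta$ makes $x \mapsto f(xy) - f(x)$ a bounded (by $\delta + \|h(y)\|$) continuous function from $X$ to $E^*$, so I set
$$H(y) := \tilde m(x \mapsto f(xy) - f(x)).$$
The computations of Theorem \ref{T1} carry over unchanged and give additivity $H(yz) = H(y) + H(z)$ (from right invariance of $\tilde m$) together with $\|H(y) - h(y)\| \leq \delta$, and Lemma \ref{L3} then promotes $H$ to a continuous map.

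\emph{Vanishing of $H$ on $L$.} The stabilizer of $x_0 = Le$ under the right action on $X = G/L$ is exactly $L$, so $x_0 l = x_0$ for every $l \in L$. Applying the hypothesis at $(x_0, l)$ yields $\|h(l)\| \leq \delta$, and hence $\|H(l)\| \leq 2\delta$ for every $l \in L$. By additivity, $n\|H(l)\| = \|H(l^n)\| \leq 2\delta$ for every $n \in \mathbb{N}$, forcing $H|_L \equiv 0$.

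\emph{Descending to $F$.} Because $H(lg) = H(l) + H(g) = H(g)$, the continuous map $H : G \to E^*$ is constant on right cosets and factors as $H = \bar H \circ \pi$ for a unique $\bar H : X \to E^*$; since $\pi$ is a quotient map, $\bar H$ is continuous. Put $F(x) := f(x_0) + \bar H(x)$. Then $F$ is continuous, the identity $F(xy) = F(x) + H(y)$ reduces to additivity of $H$, and for $x = Lg$ the estimate $\|F(x) - f(x)\| \leq 2\delta$ follows by combining $\|f(x_0 g) - f(x_0) - h(g)\| \leq \delta$ with $\|h(g) - H(g)\| \leq \delta$.

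\emph{Main obstacle.} The only step that is genuinely new over the discrete case is continuity: for $H$ it is handled uniformly by Lemma \ref{L3}, while continuity of $F$ reduces, via the universal property of the quotient topology, to continuity of $H$. On this route the fiber-bundle hypothesis is not visibly needed; it would, however, be natural if one instead mirrored Theorem \ref{T1} more literally by constructing $F$ through a second, left invariant mean on $\b{CB}(G; E^*)$ and then sought to prove continuity of $F$ via a local cross section $s : U \to G$.
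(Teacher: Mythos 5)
Your proposal is correct, and the second half is genuinely different from the paper's argument. The construction of $H$ is identical: both use Lemmas \ref{L1} and \ref{L2} to get a mean-type operator on $\b{CB}(X;E^*)$, set $H(y)=\tilde m(x\mapsto f(xy)-f(x))$, verify additivity and $\|H-h\|_\infty\leq\delta$, and invoke Lemma \ref{L3} for continuity. For $F$, however, the paper does exactly what you anticipate in your closing remark: it introduces a second operator $n$ on $\b{CB}(G;E^*)$ coming from a left invariant mean on the left $G$-space $G$, defines $F(x)=n(z\mapsto f(xz)-H(z))$, and then uses the local cross section $s:U\to G$ (this is where the fiber-bundle hypothesis enters) to prove continuity of $F$ via $x_\lambda=x(y^{-1}y_\lambda)$ and $F(x_\lambda)=F(x)+H(y^{-1}y_\lambda)\to F(x)$. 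Your alternative — evaluating the hypothesis at $(x_0,l)$ to get $\|h(l)\|\leq\delta$, hence $\|H(l^n)\|=n\|H(l)\|\leq2\delta$ and $H|_L\equiv0$, so that $H$ descends through the quotient topology to $\bar H$ and $F:=f(x_0)+\bar H$ works — checks out completely: well-definedness, the identity $F(xy)=F(x)+H(y)$, the bound $\|F(x)-f(x)\|\leq\|f(x_0g)-f(x_0)-h(g)\|+\|H(g)-h(g)\|\leq2\delta$, and continuity of $\bar H$ from the universal property of the quotient topology. The payoff of your route is twofold: it uses only one invariant mean, and it shows the fiber-bundle hypothesis is superfluous for this theorem, which is a genuine strengthening of the stated result. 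What the paper's mean-based construction of $F$ buys is uniformity of method: it is the same construction used in Theorems \ref{T1} and \ref{T3}, where $X$ is an arbitrary $G$-set or $G$-space and your descent argument (which leans on $X$ being literally the orbit of a point with stabilizer $L$) is unavailable. It would be worth recording your observation explicitly, since it improves the theorem.
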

\begin{proof}
It follows from Lemmas \ref{L1} and \ref{L2} that there is a bounded linear map $m$ from $\b{CB}(X;E^*)$ to $E^*$ satisfying properties analogous to those of $\tilde{m}$ in Lemma \ref{L2}. For every $y\in G$ the map
$x\mapsto f(xy)-f(x)$ is continuous, and bounded by $\|h(y)\|+\delta$. So we may define a map $H:G\to E^*$ by
$H(y)=m(x\mapsto f(xy)-f(x))$.  Analogous to the proof of Theorem \ref{T1},
\begin{align*}
H(y)+H(z)&=m(x\mapsto f(xy)-f(x))+m(x\mapsto f(xz)-f(x))\\
&=m(x\mapsto f(xy)-f(x))+m(x\mapsto f((xy)z)-f(xy))\\
&=m(x\mapsto f(x(yz))-f(x))=H(yz).
\end{align*}
The norm inequality is proved by,
\begin{align*}
\|H(y)-h(y)\|&=\|m(x\mapsto f(xy)-f(x))-h(y)\|\\
&=\|m(x\mapsto f(xy)-f(x)-h(y))\|\\
&\leq\|m\|\|x\mapsto f(xy)-f(x)-h(y)\|_\infty\leq\delta.
\end{align*}
Now it follows from Lemma \ref{L3} that $H$ is also continuous.

The map $z\mapsto f(xz)-H(z)$ is continuous, and bounded by $\|f(x)\|+2\delta$.
A result analogous to Lemma \ref{L2} shows that there is a bounded linear map $n$ from $\b{CB}(G;E^*)$ to $E^*$ satisfying
properties analogous to those of $\tilde{m}$ in Lemma \ref{L2}, for the left $G$-space $G$.
So we can define a map $F:X\to E^*$ by $F(x)=n(z\mapsto f(xz)-H(z))$. Analogous to the proof of Theorem \ref{T1},
\begin{align*}
F(x)+H(y)&=n(z\mapsto f(xz)-H(z))+ H(y)\\
&=n(z\mapsto f(xz)-H(z)+H(y))\\
&=n(z\mapsto f(xz)-H(y^{-1}z))\\
&=n(z\mapsto f(xyz)-H(z))=F(xy).
\end{align*}
The norm inequality is proved by,
\begin{align*}
\|F(x)-f(x)\|&=n(z\mapsto f(xz)-H(z))-f(x)\\
&=n(z\mapsto f(xz)-H(z)-f(x))\leq2\delta.
\end{align*}
It remains to show that $F$ is continuous. This can be done as follows.
Let $x$ be an arbitrary element of $X$ and let $(x_\lambda)_\lambda$ be a net in $X$ converging to $x$. Suppose that $U$
is an open subset of $X$ containing $x$ and $s:U\to G$ be a local cross section around $x$. There exists a $\lambda_0$ such that
$x_\lambda\in U$ for every $\lambda\geq\lambda_0$. Let $s(x)=y$ and $s(x_\lambda)=y_\lambda$ for $\lambda\geq\lambda_0$.
So $y_\lambda\to y$ in $G$. We have $x_\lambda=Ly_\lambda=Lyy^{-1}y_\lambda=x(y^{-1}y_\lambda)$. So $F(x_\lambda)=F(x)+H(y^{-1}y_\lambda)$ and,
$$\lim_{\lambda}F(x_\lambda)=F(x)+\lim_\lambda H(y^{-1}y_\lambda)=F(x)+H(e)=F(x).$$
This completes the proof.
\end{proof}
Analogue of the above theorem is satisfied for more general groups and $G$-spaces but with stronger conditions on $f$:
\begin{theorem}\label{T3}
Let $G$ be an amenable topological group, $X$ be a right $G$-space and $E$ be a Banach space. Suppose that there are given
maps $h:G\to E^*$ and $f:X\to E^*$ such that $f\in\b{UC}(X;E^*)$ and,
$$\|f(xy)-f(x)-h(y)\|\leq\delta\hspace{10mm}(x\in X, y\in G).$$
Then there are maps $H:G\to E^*$ and $F:X\to E^*$ such that $H$ is continuous and $F\in\b{UC}(X;E^*)$, and such that
$H(yz)=H(y)+H(z)$, $F(xy)=F(x)+H(y)$, $\|H(y)-h(y)\|\leq\delta$, and $\|F(x)-f(x)\|\leq2\delta$.
\end{theorem}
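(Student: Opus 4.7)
The plan is to mirror the proof of Theorem \ref{T2} essentially verbatim, with two substitutions: the mean on $\b{CB}(X;E^*)$ is replaced by a mean $m$ on $\b{RUCB}(X;E^*)$, and the mean on $\b{CB}(G;E^*)$ by a mean $n$ on $\b{LUCB}(G_l;E^*)$ (where $G_l$ denotes $G$ as a left $G$-space for itself); and the fiber-bundle argument used in Theorem \ref{T2} for continuity of $F$ is replaced by a direct argument exploiting $f\in\b{LU}(X;E^*)$. The existence of $m$ and $n$ follows from amenability of $G$ together with Lemmas \ref{L1} and \ref{L2} (and the obvious left-space analog of the latter).

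Once $H(y):=m(x\mapsto f(xy)-f(x))$ and $F(x):=n(z\mapsto f(xz)-H(z))$ have been defined as in Theorem \ref{T2}, the algebraic identities $H(yz)=H(y)+H(z)$ and $F(xy)=F(x)+H(y)$, the pointwise bounds $\|H(y)-h(y)\|\leq\delta$ and $\|F(x)-f(x)\|\leq 2\delta$, and the continuity of $H$ via Lemma \ref{L3}, all carry over formally. The only nontrivial point is to confirm that the two integrands actually lie in the smaller function spaces $\b{RUCB}(X;E^*)$ and $\b{LUCB}(G_l;E^*)$, so that the means and their invariance properties can be applied.

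For $x\mapsto f(xy)-f(x)\in\b{RUCB}(X;E^*)$ (with $y$ fixed) I would note that boundedness follows from the approximate equation, and right-uniformity reduces, via the identity $f(xzy)-f(xy)=f((xy)\cdot y^{-1}zy)-f(xy)$, to applying $f\in\b{RU}(X;E^*)$ with the conjugate $y^{-1}zy$, which is close to $e$ whenever $z$ is, by continuity of conjugation by $y$. For $z\mapsto f(xz)-H(z)\in\b{LUCB}(G_l;E^*)$ (with $x$ fixed), boundedness follows from $\|f(xz)-H(z)\|\leq\|f(x)\|+2\delta$, and the left-uniformity condition reduces to controlling $f(xbz)-f(xz)-H(b)$ uniformly in $z$ as $b\to e$; here $H(b)\to 0$ by continuity of $H$ at $e$, and since $xb\to x$ in $X$ by joint continuity of the action, $f\in\b{LU}(X;E^*)$ yields $\sup_z\|f((xb)z)-f(xz)\|<\epsilon$.

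It then remains to check that $F\in\b{UC}(X;E^*)$. The functional equation yields the two clean identities $F(xy)-F(x)=H(y)$ and $F(xy)-F(x'y)=F(x)-F(x')$. The first, together with continuity of $H$ at $e$, immediately gives $F\in\b{RU}$. The second reduces $F\in\b{LU}$ to continuity of $F$, which itself follows from the bound $\|F(x)-F(x')\|\leq\|n\|\cdot\sup_z\|f(xz)-f(x'z)\|$ combined with $f\in\b{LU}(X;E^*)$. The main obstacle throughout is verifying these uniform-continuity memberships of the integrands; once those are settled, every remaining step is a direct transcription of the proof of Theorem \ref{T2}.
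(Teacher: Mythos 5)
Your proposal follows the paper's proof of Theorem \ref{T3} essentially step for step: the same definitions $H(y)=m(x\mapsto f(xy)-f(x))$ and $F(x)=n(z\mapsto f(xz)-H(z))$, the same verifications that the two integrands lie in $\b{RUCB}(X;E^*)$ and $\b{LUCB}(G;E^*)$ (your conjugation identity $f(xzy)=f\bigl((xy)\,(y^{-1}zy)\bigr)$ is a cosmetic variant of the paper's use of the second, pointwise formulation of right uniformity), and the same two identities $F(xy)-F(xy')=H(y)-H(y')$ and $F(xy)-F(x'y)=F(x)-F(x')$ together with continuity of $H$ and of $F$ to get $F\in\b{U}(X;E^*)$.

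One step does not ``carry over formally'': the continuity of $H$. Unlike Theorem \ref{T2}, Theorem \ref{T3} does not assume $h$ continuous (only $f\in\b{UC}(X;E^*)$ is required), so Lemma \ref{L3} cannot be applied with $h$ as the continuous comparison function. The paper avoids this by showing that $y\mapsto(x\mapsto f(xy)-f(x))$ is norm-continuous from $G$ into $\b{RUCB}(X;E^*)$ (an immediate consequence of $f\in\b{RU}$) and composing with the bounded operator $m$. Your route is salvageable in one line: apply Lemma \ref{L3} to $H$ and the continuous map $y\mapsto f(x_0y)-f(x_0)$ for any fixed $x_0\in X$, which is within $2\delta$ of $H$ by the approximate equation and the bound $\|H-h\|_\infty\leq\delta$. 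With that repair, the rest of your argument is correct and coincides with the paper's.
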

\begin{proof}
Let $y,z\in G$ be fixed. Since $f$ is right uniform, for every $\epsilon>0$ there is an open $U_\epsilon\subseteq G$ containing $zy$ such that
$\|f(xzy)-f(xy')\|<\epsilon$ for every $x\in X$ and every $y'\in U_\epsilon$. Since $G$ is a topological group there is an open $U'\subseteq G$
containing $z$ such that $z'y\in U_\epsilon$ for every $z'\in U'$. It follows that $\|f(xzy)-f(xz'y)\|<\epsilon$ for every $z'\in U'$ and $x\in X$.
This shows that the map $x\mapsto f(xy)$ is right uniform. From this and the assumptions it follows that the map $x\mapsto f(xy)-f(x)$ belongs
to $\b{RUCB}(X;E^*)$.

Since $f$ is right uniform, for every $y\in G$ and $\epsilon>0$ there is an open $U_{y,\epsilon}\subseteq G$ containing $y$ such that
$\|f(xy)-f(x)-f(xy')+f(x)\|=\|f(xy)-f(xy')\|<\epsilon$ for every $x\in X$ and every $y'\in U_{y,\epsilon}$.
It follows that the map $y\mapsto(x\mapsto f(xy)-f(x))$ is continuous from $G$ to $\b{RUCB}(X,E^*)$. On the other hand
it follows from Lemmas \ref{L1} and \ref{L2} that there is a linear map $m$ from $\b{RUCB}(X;E^*)$ to $E^*$
satisfying properties analogous to those of $\tilde{m}$ in Lemma \ref{L2}.
So we can define a continuous map $H:G\to E^*$ by $H(y)=m(x\mapsto f(xy)-f(x))$
which, analogous to Theorem \ref{T2}, satisfies $H(yz)=H(y)+H(z)$ and $\|H(y)-h(y)\|\leq\delta$.

Now let $x\in X$ and $y\in G$ be fixed. Since $f$ is left uniform, for every $\epsilon>0$ there is an open $V_\epsilon\subseteq X$ containing $xy$
such that $\|f(xyz)-f(x'z)\|<\epsilon$ for every $z\in G$ and every $x'\in U_\epsilon$. Since the action is continuous there is an open
$U_{\epsilon}\subseteq G$ containing $y$ such that $xy'\in V_\epsilon$ for every $y'\in U_\epsilon$. Thus $\|f(xyz)-f(xy'z)\|<\epsilon$ for
every $y'\in U_\epsilon$ and every $z\in G$. It follows that the map $y\mapsto f(xy)$ is in $\b{LU}(G,E^*)$. Since $H$ is a continuous homomorphism,
$H$ is also a uniform map. So for every fixed $x\in X$ the map $y\mapsto f(xy)-H(y)$ is in $\b{LU}(G,E^*)$. On the other hand this map
is continuous, and bounded by $\|f(x)\|+2\delta$. Thus we have shown that this map belongs to $\b{LUCB}(G,E^*)$.

Since $f$ is left uniform, for every $x$ and every $\epsilon>0$ there is an open
$V_{x,\epsilon}\subseteq X$ containing $x$ such that $\|f(xy)-H(y)-f(x'y)+H(y)\|=\|f(xy)-f(x'y)\|<\epsilon$ for every $x'\in V_{x,\epsilon}$
and every $y\in G$. It follows that the map $x\mapsto(y\mapsto f(xy)-H(y))$ is continuous from $X$ to $\b{LUCB}(G,E^*)$. On the other hand
a result analogous to Lemma \ref{L2} shows that there is a linear map $n$ from $\b{LUCB}(G;E^*)$ to $E^*$ satisfying
properties analogous to those of $\tilde{m}$ in Lemma \ref{L2}, for the left $G$-space $G$.
So we can define a continuous map $F:X\to E^*$ by $F(x)=n(z\mapsto f(xz)-H(z))$ and, analogous to Theorem \ref{T2}, it is proved that
$F(xy)=F(x)+H(y)$ and $\|F(x)-f(x)\|\leq2\delta$ for every $x\in X$ and $y\in G$.

It remains to show that $F$ is a uniform map. Let $x\in X$. Since $F$ is continuous, for every $\epsilon>0$ there is an open $V\subseteq X$
containing $x$ such that $\|F(x)-F(x')\|<\epsilon$ for every $x'\in V$, and hence $\|F(xy)-F(x'y)\|=\|F(x)+H(y)-F(x')-H(y)\|<\epsilon$
for every $y\in G$. This shows that $F$ is left uniform. Since $H$ is continuous, for $y\in G$ and $\epsilon>0$ there is an open
$U\subseteq G$ containing $y$ such that for every $y\in U$, $\|H(y)-H(y')\|<\epsilon$, and hence
$\|F(xy)-F(xy')\|=\|F(x)+H(y)-F(x)-H(y')\|<\epsilon$ for every $x\in X$. This shows that $F$ is right uniform.
\end{proof}

\end{document}